\newtheorem{theorem}{Theorem}
\newtheorem{lemma}{Lemma}
\newtheorem{cor}{Corollary}
\begin{document}

\title{A Simple Steady-State Analysis of Load Balancing Algorithms in the Sub-Halfin-Whitt Regime}

\author{\IEEEauthorblockN{Xin Liu and Lei Ying}\\
\IEEEauthorblockA{School of Electrical, Computer and Energy Engineering,\\
Arizona State University\\
Email: \{xliu272, lei.ying.2\}@asu.edu}
}

\maketitle

\section{abstract}

This paper studies a class of load balancing algorithms  for many-server ($N$ servers) systems assuming finite buffer with size $b-1$ (i.e. a server can have at most one job in service and $b-1$ jobs in queue). We focus on steady-state performance of load balancing algorithms in the heavy traffic regime such that the load of system is $\lambda = 1 - N^{-\alpha}$ for $0<\alpha<0.5,$ which we call sub-Halfin-Whitt regime ($\alpha=0.5$ is the so-called the Halfin-Whitt regime). We establish a sufficient condition under which the probability that an incoming job is routed to an idle server is one asymptotically. The class of load balancing algorithms that satisfy the condition includes join-the-shortest-queue (JSQ), idle-one-first (I1F), join-the-idle-queue (JIQ), and power-of-$d$-choices (Po$d$) with $d=N^\alpha\log N$. The proof of the main result is based on the framework of Stein's method. A key contribution is to use a simple generator approximation based on state space collapse.

\section{Introduction}
This paper studies the steady-state performance of load balancing algorithms in many-server systems. We consider a system with $N$ identical servers with {\em buffer size $b-1$ such that $b=o\left(\sqrt{\log N}\right),$ in other words, each server can hold at most $b$ jobs, one job in service and $b-1$ jobs in buffer.} We assume jobs arrive according to a Poisson process with rate $\lambda N,$ where $\lambda=1-N^{-\alpha}$ for $0<\alpha<0.5,$ and have exponential service times with mean one. When a job arrives, the load balancer immediately routes the job to one of the servers. If the server's buffer is full, the job is discarded. We study a class of load balancing algorithms, which includes join-the-shortest-queue (JSQ), idle-one-first (I1F) \cite{GuptaWalton_17}, join-the-idle-queue (JIQ) \cite{LuXieKli_11,Sto_15} and power-of-$d$-choices (Po$d$) with $d=N^\alpha\log N$ \cite{Mit_96,VveDobKar_96}, and  establish an upper bound on the mean queue length. From the queue-length bound, we further show that under JSQ, I1F, and Po$d$ with $d=N^\alpha \log N,$ the probability that a job is routed to a non-idle server and the expected waiting time per job are both $O\left(\frac{\log N}{\sqrt{N}}\right),$ which means only $O\left(\frac{\log N}{\sqrt{N}}\right)$ fraction of jobs experience non-zero waiting or are discarded. For JIQ, we show that the probability of waiting is $O\left(\frac{b}{N^{0.5-\alpha}\log N}\right).$

\subsection{Related Work and Our Contributions}

Performance analysis of many-server systems is one of the most fundamental and widely-studied problems in queueing theory. The stationary distribution of the classic $M/M/N$ system (or called Erlang-C model) is one of the earliest subjects of queueing theory. For systems with distributed queues where each server maintains a separate queue, it is well known that the join-the-shortest-queue (JSQ) algorithm is delay optimal \cite{Win_77,Web_78} under fairly general conditions. However, the exact stationary distribution of many-server systems under JSQ remains to be an open problem.  A recent breakthrough in this area is \cite{EscGam_18}, which shows that in the Halfin-Whitt regime ($\alpha=0.5$), the diffusion-scaled process converges to a two-dimensional diffusion limit, from which it can be shown that most servers have one job in service and $O(\sqrt{N})$ servers have two jobs (one in service and one in buffer). This seminal work has led to several significant developments: (i)  \cite{Bra_18} proved that the stationary distribution indeed converges to the  stationary distribution of the two-dimensional diffusion limit based on Stein's method; and (ii) via stochastic coupling, \cite{MukBorvan_16} showed that the diffusion limit of Po$d$ converges to that of JSQ in the Halfin-Whitt regime at the process level (over finite time) when $d=\Theta(\sqrt{N}\log N);$ and (iii) when $\alpha<1/6,$ \cite{LiuYin_18} proved that the waiting probability of a job is asymptotically zero with $d=\omega\left(\frac{1}{1-\lambda}\right)$ at the steady-state based on Stein's method. Interested readers can find a comprehensive survey of recent results in \cite{VanBorVan_17}. 

Let $S_i$ denote the fraction of servers with at least $i$ jobs {\em at steady state}. In this paper, we prove that
\begin{align*}
E\left[\max\left\{\sum_{i=1}^b S_i-\lambda -\frac{k\log N}{\sqrt{N}},0\right\}\right]\leq \frac{29b}{\sqrt{N}\log N}, ~\text{with}~ k=1+\frac{1}{2(b-1)},
\end{align*} for a class of load balancing algorithms that route an incoming job to an idle server with probability at least $1-\frac{1}{\sqrt{N}}$ when $S_1\leq \lambda+\frac{k\log N}{\sqrt{N}}.$  This result implies that  (i) $$E\left[\sum_{i=1}^b {S}_i\right]\leq \lambda +\frac{k\log N}{\sqrt{N}}+\frac{29b}{\sqrt{N}\log N},$$ i.e, the average queue length per server exceeds $\lambda$ by at most $O\left(\frac{\log N}{\sqrt{N}}\right);$  and (ii)  under JSQ,  I1F, JIQ and Po$d$ ($d=N^{\alpha}\log N$), the probability that an incoming job is routed to a non-idle server is asymptotically zero.

From the best of our knowledge, there are only a few papers that deal with the steady-state analysis of many-server systems with distributed queues \cite{Bra_18,BanMuk_18,LiuYin_18}. \cite{Bra_18,BanMuk_18} analyze the steady-state distribution of JSQ in the Halfin-Whitt regime and \cite{LiuYin_18} studies the Po$d$ with $\alpha<1/6.$ This paper complements \cite{Bra_18,BanMuk_18,LiuYin_18}, as it applies to a class of load balancing algorithms and to any sub-Halfin-Whitt regime.

Similar to \cite{Bra_18,LiuYin_18}, the result of this paper is proved using the mean-field approximation (fluid-limit approximation) based on Stein's method. The execution of Stein's method in this paper, however, is quite different from \cite{Bra_18,LiuYin_18}. In our proof, a simple mean-field model (fluid-limit) model $\sum_{i=1}^b \dot{S}_i =-\frac{\log N}{\sqrt{N}}$  is used to partially approximate the evolution of the stochastic system when the system is away from the mean-field equilibrium. This is because in this paper, we are interested in bounding $$E\left[\max\left\{\sum_{i=1}^b {S}_i-\lambda -\frac{k\log N}{\sqrt{N}},0\right\}\right],$$ i.e. when $\sum_{i=1}^b {S}_i\geq \lambda +\frac{k\log N}{\sqrt{N}}>\lambda.$ Note that this simple mean-field model is not even accurate when $\sum_{i=1}^b {S}_i\geq \lambda +\frac{k\log N}{\sqrt{N}}.$ However, using state-space collapse (SSC) approach based on the tail bound in \cite{BerGamTsi_01}, we show that the generator difference is small. In the literature, SSC has been used to show that the approximation error of using a low-dimensional system is order-wise smaller than the queue length (or some function of the queue length). Instead in this paper, we show that the error is a fraction of $E\left[\max\left\{\sum_{i=1}^b {S}_i-\lambda-\frac{k\log N}{\sqrt{N}}\right\},0\right]$, but not negligible, with a high probability. We then deal with this error by subtracting it from  $E\left[\max\left\{\sum_{i=1}^b {S}_i-\lambda -\frac{k\log N}{\sqrt{N}},0\right\}\right]$ without bounding it explicitly. Furthermore, SSC is proved only in the regime $\sum_{i=1}^b {S}_i\geq \lambda +\frac{k\log N}{\sqrt{N}},$ which turns out to be sufficient and easy to prove. Pioneered in \cite{Sto_15_2} (called drift-based-fluid-limits (DFL) method) for fluid-limit analysis and in \cite{BraDaiFen_15,BraDai_17} for steady-state diffusion approximation, the power of Stein's method for steady-state approximations has been recognized in a number of recent papers \cite{Sto_15_2,BraDaiFen_15,Yin_16,BraDai_17,Yin_17,Gas_17,Gas_18,Bra_18}.  All proofs in this paper are elementary. Therefore, this paper is another an example that demonstrates the power of Stein's method for analyzing complex queueing systems with elementary probability methods.

\section{Model and Main Results}
Consider a many-server system with $N$ homogeneous servers, where job arrival follows a Poisson process with rate $\lambda N$ and service times are i.i.d. exponential random variables with rate one. We consider the sub-Halfin-Whitt regime such that $\lambda=1-N^{-\alpha}$ for some $0<\alpha<0.5.$  As shown in Figure \ref{model-pod}, each server maintains a separate queue and we assume buffer size $b-1$ (i.e., each server can have one job in service and $b-1$ jobs in queue).

\begin{figure}[!htbp]
  \centering
  \includegraphics[width=3.3in]{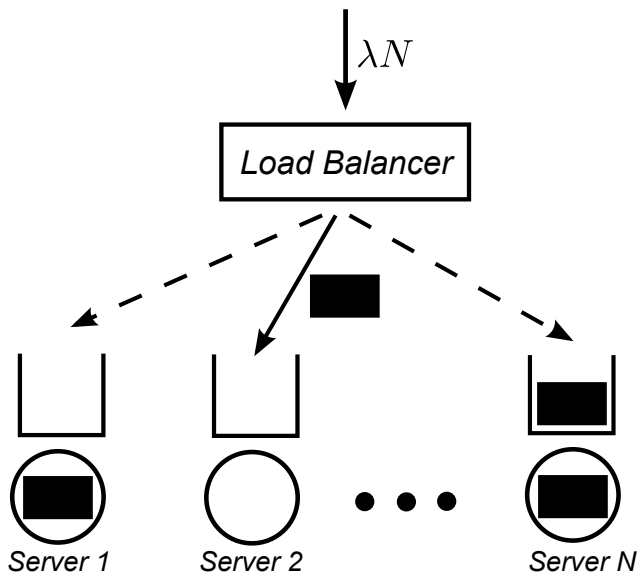}
  \caption{Load Balancing in Many-Server Systems.}
  \label{model-pod}
\end{figure}

We study a class of load balancing algorithms which route each incoming job to a server upon its arrival. Denote by $S_i(t)$ the fraction of servers with queue length at least $i$ at time $t.$ Under the finite buffer assumption with buffer size $b$, $S_i = 0, \forall i \geq b+1.$  Define $\mathcal S$ to be $$\mathcal S = \{ s ~|~ 1\geq s_1\geq \cdots \geq s_{b}\geq  0\},$$ and $S(t) = [S_1(t), S_2(t), \cdots ,S_b(t)].$ We consider load balancing algorithms such that $S(t) \in \mathcal S$ is a continuous-time Markov chain (CTMC) and has a unique stationary distribution, denoted by $S,$ for any $\lambda$. Note $\lambda,$ $S(t)$ and $S$ all depend on $N,$ the number of servers in the system. Let $A_1(S)$ denote the probability that an incoming job is routed to a busy server when the state of the system is $S.$ Our main result of this paper is the following theorem.
\begin{theorem} \label{Thm:main}
Assume $\lambda=1-N^{-\alpha},$ $0<\alpha<0.5,$ and $b = o(\sqrt{\log N}).$ Under any load balancing algorithm such that $A_1(S)\leq \frac{1}{\sqrt{N}}$ when $S_1\leq \lambda+\frac{k\log N}{\sqrt{N}}$ with $k = 1+\frac{1}{2(b-1)},$ the following bound holds when $N$ is sufficiently large:
\begin{align*}
E\left[\max\left\{\sum_{i=1}^{b} S_i-\lambda -\frac{k\log N}{\sqrt{N}},0\right\}\right]\leq \frac{29b}{\sqrt{N}\log N}.
\end{align*}
\end{theorem}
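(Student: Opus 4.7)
The plan is to apply Stein's method using the one-dimensional fluid model $\dot q = -\Delta$ (with $\Delta := \log N/\sqrt N$) and the Lyapunov function
$$V(s) = \frac{1}{2\Delta}\bigl((Q(s)-\gamma)^+\bigr)^2,\qquad Q(s) := \sum_{i=1}^b s_i,\;\; \gamma := \lambda + k\Delta,$$
for which $G_{\rm fl}V(s) = -\Delta\,V'(Q) = -(Q-\gamma)^+$. Stationarity gives $E[GV(S)] = 0$, hence
$$E[(Q-\gamma)^+] \;=\; E\bigl[(GV - G_{\rm fl}V)(S)\bigr].$$
Each CTMC transition shifts $Q$ by $\pm 1/N$ (upward rate $\lambda N(1-P_{\rm drop}(S))$, downward rate $NS_1$), so a Taylor expansion of $V$ produces
$$(GV - G_{\rm fl}V)(S) \;=\; \frac{(Q-\gamma)^+}{\Delta}\,R(S) + e(S), \quad R(S) := \lambda\bigl(1 - P_{\rm drop}(S)\bigr) - S_1 + \Delta,$$
with Taylor remainder $|e(S)| \leq 1/(\Delta N) = 1/(\sqrt N\log N)$.

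Next I would partition by $S_1$. On $T_1 := \{S_1 \geq \gamma\}$ the drift gap $\lambda - S_1 + \Delta \leq -(k-1)\Delta$ makes the $T_1$ contribution favorably negative. On $T_2 := \{S_1 < \gamma\}$ the theorem's hypothesis yields $A_1(S) \leq 1/\sqrt N$, so the waiting mass $W := \sum_{i=2}^{b} S_i$ has drift
$$\mathrm{drift}(W) \leq \lambda A_1(S) - S_2 \leq \frac{\lambda}{\sqrt N} - \frac{W}{b-1}\qquad\text{on }T_2,$$
using $W \leq (b-1)S_2$. A Bertsimas--Gamarnik--Tsitsiklis moment-generating-function argument then produces an exponential tail $P(W > m \mid T_2) \leq \exp(-c(m - b/\sqrt N)N/b)$. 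I would choose the SSC margin $m$ of order $b\log N/N$ above the equilibrium $b/\sqrt N$, so that the bad event $\mathcal E^c := \{W > m\}$ has probability at most $1/N$; the hypothesis $b = o(\sqrt{\log N})$ ensures $m \ll \Delta/(b-1)$, so that on $T_2 \cap \mathcal E \cap \{Q>\gamma\}$ one has $S_1 = Q - W \geq \gamma - m$ and hence
$$R(S) \leq -(k-1)\Delta + m \leq -\frac{\Delta}{4(b-1)}.$$

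Combining the three regions, $E[(Q-\gamma)^+] = E[(GV - G_{\rm fl}V)(S)]$ yields
$$E[(Q-\gamma)^+] \leq -(k-1)E[(Q-\gamma)^+\mathbf{1}_{T_1}] - \frac{1}{4(b-1)}E[(Q-\gamma)^+\mathbf{1}_{T_2\cap\mathcal E}] + \frac{2}{\Delta}E[(Q-\gamma)^+\mathbf{1}_{T_2\cap\mathcal E^c}] + E[e(S)].$$
The first two terms are non-positive and can be dropped; the third, using the crude bound $(Q-\gamma)^+ \leq b$, is at most $(2b/\Delta)P(\mathcal E^c) \leq 2b/(N\Delta) = 2b/(\sqrt N\log N)$, which together with $E[e(S)] = O(1/(\sqrt N\log N))$ produces the claimed $29b/(\sqrt N\log N)$ bound once explicit constants are tracked.

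The main obstacle is the SSC step. The BGT tail bound is naturally phrased for processes with an \emph{unconditional} restoring drift, whereas here the drift estimate for $W$ holds only when the chain is in $T_2$. The cleanest remedy is to run the MGF argument through the CTMC generator applied to $e^{\theta W}\mathbf{1}_{\{S_1 < \gamma\}}$, restricting to transitions that keep the chain inside $T_2$. A second delicate point is balancing the SSC margin $m$ against the favorable drift gap $(k-1)\Delta = \Delta/(2(b-1))$: $m$ must be small enough to keep $R(S)$ strictly negative on $\mathcal E$ yet large enough that $P(\mathcal E^c) \leq O(1/N)$. Both requirements hold precisely because $b = o(\sqrt{\log N})$, which is where the assumption on the buffer size is essential.
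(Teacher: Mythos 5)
Your Stein setup is exactly the paper's: your $V(s)=\frac{1}{2\Delta}\bigl((Q-\gamma)^+\bigr)^2$ is (up to sign) the solution $g$ of the paper's Poisson equation $g'(s)(-\Delta)=(s-\gamma)^+$, and your identity $E[(Q-\gamma)^+]=E\bigl[\tfrac{(Q-\gamma)^+}{\Delta}R(S)\bigr]+O\bigl(\tfrac{1}{\sqrt N\log N}\bigr)$ is the paper's display \eqref{G-expansion}. The gaps are both in the SSC step. First, the conditioning problem you flag is a real obstruction, not a technicality: the restoring drift of $W=\sum_{i\geq 2}S_i$ exists only on $T_2=\{S_1<\gamma\}$, the chain can enter $T_2$ with $W$ already large, and applying the generator to $e^{\theta W}\mathbb{I}_{T_2}$ creates boundary-crossing terms you cannot sign. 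The paper's device is to replace $W$ by $\tilde V(s)=\min\bigl\{\sum_{i\geq 2}s_i,\ \gamma-s_1\bigr\}$: whenever $\tilde V(s)\geq\Delta$ one automatically has $s_1\leq\gamma-\Delta$, so the hypothesis $A_1\leq 1/\sqrt N$ is in force and the drift of $\tilde V$ is $\leq -\tfrac{\Delta}{2(b-1)}+\tfrac{1}{\sqrt N}$ \emph{unconditionally} on $\{\tilde V\geq\Delta\}$; and on $\{Q>\gamma\}$ the second branch is active, so $\tilde V=\gamma-S_1$ is exactly the quantity controlling $R(S)$. That min construction is the missing idea.

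Second, your quantitative target is unattainable. You need $R(S)\leq -(k-1)\Delta+m<0$, hence SSC at the scale $m\lesssim\Delta/(4(b-1))$ with tail probability $O(1/N)$, and you claim $\Pr(W>m)\leq\exp(-c(m-b/\sqrt N)N/b)$. But the BGT/tail lemma gives a per-step decay of roughly the drift magnitude, which at level $W\approx b/\sqrt N$ is only $O(1/\sqrt N)$; over the $j\approx b\log N$ transitions needed to climb your margin $b\log N/N$, the bound is $e^{-O(b\log N/\sqrt N)}\approx 1$, not $N^{-c}$. To reach tail $1/N$ via this lemma you must take $m=\Theta(\Delta)$, which destroys the sign of $R(S)$ for $b\geq 3$. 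The paper sidesteps this entirely: it applies the tail bound only at the coarse threshold $\tilde V\leq\tilde k\Delta$ with $\tilde k=1+\tfrac{1}{4(b-1)}$ (where the drift is $-\Theta(\Delta/b)$ and the tail is $e^{-\log^2 N/(32(b-1)^2)}$, super-polynomially small precisely because $b=o(\sqrt{\log N})$), concluding only that $R(S)\leq\bigl(1-\tfrac{1}{4(b-1)}\bigr)\Delta$ --- still \emph{positive}. The resulting term $\bigl(1-\tfrac{1}{4(b-1)}\bigr)E[(Q-\gamma)^+]$ is then moved to the left-hand side, and dividing by $\tfrac{1}{4(b-1)}$ produces the factor $b$ in the final bound $29b/(\sqrt N\log N)$. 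You drop the $T_2\cap\mathcal E$ term as non-positive instead of using it to absorb a $(1-\epsilon)$ fraction of the left-hand side; without that absorption step the argument requires a fine-scale SSC that the available drift does not support.
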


Note that the condition $A_1(S)\leq \frac{1}{\sqrt{N}}$ when $S_1\leq \lambda+\frac{k\log N}{\sqrt{N}}$ implies that an incoming job should be routed to an idle server with probability at least $1-\frac{1}{\sqrt{N}}$ when at least $\frac{1}{N^{\alpha}}-\frac{k\log N}{\sqrt{N}}$ fraction of servers are idle. There are several well-known policies that satisfy this condition.
\begin{itemize}
\item {\bf Join-the-Shortest-Queue (JSQ)}: JSQ routes an incoming job to the least loaded server in the system, so $A_1(S)=0$ when $S_1\leq \lambda+\frac{k\log N}{\sqrt{N}}.$

\item {\bf Idle-One-First (I1F)}: I1F routes an incoming job to an idle server if available and else to a server with one job if available. Otherwise, the job is routed to a randomly selected server. Therefore, $A_1(S)=0$ when $S_1\leq \lambda+\frac{k\log N}{\sqrt{N}}.$  

\item {\bf Join-the-Idle-Queue (JIQ)}: JIQ routes an incoming job to an idle server if possible and otherwise, routes the job to server chosen uniformly at random. Therefore, $A_1(S)=0$ when $S_1\leq \lambda+\frac{k\log N}{\sqrt{N}}.$

\item {\bf Power-of-$d$-Choices (Po$d$)}:  Po$d$ samples $d$ servers uniformly at random and dispatches the job to the least loaded server among the $d$ servers. Ties are broken uniformly at random. When $d=N^\alpha\log N,$ $A_1(S)\leq \frac{1}{\sqrt{N}}$ when $S_1\leq \lambda+\frac{k\log N}{\sqrt{N}}.$
\end{itemize}

A direct consequence of Theorem \ref{Thm:main} is asymptotic zero waiting.  Let $\mathcal W_N$ denote the event that an incoming job is routed to a busy server in a system with $N$ servers, and $p_{\mathcal W_N}$ denote the probability of this event at the steady-state. Let $\mathcal B_N$ denote the event that an incoming job is blocked (discarded) and $p_{\mathcal B_N}$ denote the probability of this event at the steady-state. Furthermore, let $W_N$ denote the waiting time of a job (when the job is not dropped). We have the following results based on the main theorem.

\begin{cor} \label{Thm:zerodelay}
Assume $\lambda=1-N^{-\alpha},$ $0<\alpha<0.5,$ and $b=o\left(\sqrt{\log N}\right).$ For sufficiently large $N,$ we have
\begin{itemize}
\item Under JSQ, IF1, and Po$d$ with $d=N^\alpha\log N,$ $$E\left[W_N\right]\leq \frac{3\log N}{\sqrt{N}}, \quad\hbox{and}\quad p_{\mathcal {\cal W}_N}\leq \frac{4\log N}{\sqrt{N}}.$$

\item Under JIQ, $$p_{\mathcal {\cal W}_N}\leq \frac{30b}{N^{0.5-\alpha}\log N}.$$

\end{itemize}
\end{cor}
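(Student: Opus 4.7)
The plan is to derive both parts of the corollary from Theorem \ref{Thm:main} by combining Little's law, flow balance, and a simple residual-wait observation. Flow balance gives $E[S_1]=\lambda(1-p_{\mathcal{B}_N})$, and Little's law applied to the jobs residing in the buffers yields $E[\sum_{i=2}^{b} S_i]=E[S_1]\,E[W_N]$. Writing $E[\sum_{i=2}^{b} S_i]=E[\sum_{i=1}^{b} S_i]-E[S_1]$ and invoking Theorem \ref{Thm:main},
\[
E[W_N]\;\le\;\frac{1}{\lambda(1-p_{\mathcal{B}_N})}\!\left(\frac{k\log N}{\sqrt N}+\frac{29b}{\sqrt N\log N}+\lambda\,p_{\mathcal{B}_N}\right).
\]
Since $k\le 3/2$, $\tfrac{29b}{\sqrt N\log N}=o(\tfrac{\log N}{\sqrt N})$ (using $b=o(\sqrt{\log N})$), and $\lambda\to 1$, the target $E[W_N]\le\tfrac{3\log N}{\sqrt N}$ for $N$ large reduces to bounding $p_{\mathcal{B}_N}$ by a sufficiently small quantity under each of JSQ, I1F, and Po$d$.

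Next, I would bound $p_{\mathcal{B}_N}$ algorithm by algorithm. For JSQ and I1F, blocking requires every server to be full, so $p_{\mathcal{B}_N}\le\Pr(\sum_i S_i\ge b)$, and Markov applied to Theorem \ref{Thm:main} with slack $b-\lambda-k\log N/\sqrt N\ge 1-o(1)$ gives $p_{\mathcal{B}_N}=O(1/(\sqrt N\log N))$, negligible against $\log N/\sqrt N$. For Po$d$ with $d=N^\alpha\log N$, I would use $A_b(S)\le A_1(S)\le S_1^d$ together with the geometric decay $(1-N^{-\alpha})^{N^\alpha\log N}\le 1/N$ to obtain $A_b(S)\le 1/N$ on the event $\{S_1\le\lambda\}$, and combine this with a Markov tail estimate of order $N^{-\alpha}$ on the complementary event $\{S_1>\lambda\}$; this is the most delicate step of the argument.

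To pass from $E[W_N]$ to $p_{\mathcal{W}_N}$ under JSQ, I1F, and Po$d$, observe that a waiting job is always dispatched to a server holding at least one job (idle servers are preferred, and under Po$d$ waiting requires all $d$ sampled servers to be busy), so its residual wait dominates one Exp$(1)$ service completion and $E[W_N\mid\text{waits, accepted}]\ge 1$. Combining this with the identity $E[W_N]=\tfrac{p_{\mathcal{W}_N}-p_{\mathcal{B}_N}}{1-p_{\mathcal{B}_N}}\,E[W_N\mid\text{waits, accepted}]$ yields $p_{\mathcal{W}_N}\le E[W_N]+p_{\mathcal{B}_N}\le\tfrac{4\log N}{\sqrt N}$ for $N$ large.

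The JIQ bound is handled differently because JIQ falls back to uniform random dispatch when no idle server exists, inflating the conditional wait to as much as $\Theta(b)$ and making the previous conversion lossy. However $A_1(S)=\mathbf{1}\{S_1=1\}$ under JIQ, so $p_{\mathcal{W}_N}=\Pr(S_1=1)\le\Pr(\sum_i S_i\ge 1)$; Markov on Theorem \ref{Thm:main} with slack $1-\lambda-k\log N/\sqrt N=N^{-\alpha}(1-o(1))$ directly gives the stated $\tfrac{30b}{N^{0.5-\alpha}\log N}$. The principal obstacle throughout is the interdependence between $E[W_N]$ and $p_{\mathcal{B}_N}$ forced by Little's law, which requires an independent blocking bound for each algorithm; Po$d$ is the hardest case because one must leverage both the theorem's first-moment estimate and the geometric decay of $(1-N^{-\alpha})^{N^\alpha\log N}$ to control $E[S_b^d]$.
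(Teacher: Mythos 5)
Your overall architecture matches the paper's: flow balance plus Little's law to get $E[W_N]=\frac{E[\sum_{i=1}^b S_i]}{\lambda(1-p_{\mathcal B_N})}-1$, Markov's inequality on Theorem \ref{Thm:main} with slack $b-\lambda-\frac{k\log N}{\sqrt N}$ to bound $p_{\mathcal B_N}$ for JSQ and I1F, the residual-service argument to convert $E[W_N]$ into $p_{\mathcal W_N}$ (your version, with $E[W_N\mid \text{waits, accepted}]\geq 1$ as an inequality, is if anything cleaner than the paper's stated identity), and the direct Markov bound $p_{\mathcal W_N}=\Pr(S_1=1)\leq \Pr(\sum_i S_i\geq 1)$ with slack $N^{-\alpha}-\frac{k\log N}{\sqrt N}$ for JIQ. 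All of these steps are sound and essentially identical to the paper.

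The Po$d$ blocking bound, however, has a genuine gap, and you flagged the right spot as ``the most delicate step.'' Your split on $\{S_1\leq\lambda\}$ versus $\{S_1>\lambda\}$ cannot work. First, $\Pr(S_1>\lambda)$ is not controllable from Theorem \ref{Thm:main}: the theorem only bounds the excess of $\sum_i S_i$ above $\lambda+\frac{k\log N}{\sqrt N}$, and the event $\{S_1>\lambda\}$ only forces $\sum_i S_i>\lambda$, which is \emph{below} that threshold, so Markov's inequality yields nothing; indeed $S_1$ concentrates near $\lambda$ and $\Pr(S_1>\lambda)$ should be order one. Second, even granting your claimed estimate of order $N^{-\alpha}$, it is far too weak: the Little's-law step requires $p_{\mathcal B_N}\lesssim \frac{\log N}{\sqrt N}$ (since $\lambda p_{\mathcal B_N}$ enters $E[W_N]$ additively), and $N^{-\alpha}\big/\frac{\log N}{\sqrt N}=\frac{N^{1/2-\alpha}}{\log N}\to\infty$ for every $\alpha<1/2$. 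The fix is to condition on $S_b$ rather than $S_1$: blocking under Po$d$ requires all $d$ sampled servers to be \emph{full}, so $B(S)\leq S_b^{\,d}$, and on $\{S_b\leq 1-N^{-\alpha}\}$ this is at most $(1-N^{-\alpha})^{N^\alpha\log N}\leq 1/N$; on the complement, $S_b>1-N^{-\alpha}$ forces $\sum_i S_i\geq bS_b>b(1-N^{-\alpha})$, whose distance to $\lambda+\frac{k\log N}{\sqrt N}$ is $(b-1)(1-N^{-\alpha})-\frac{k\log N}{\sqrt N}\geq 1-o(1)$, an order-one slack, so Markov on Theorem \ref{Thm:main} gives $O\left(\frac{b}{\sqrt N\log N}\right)$ and hence $p_{\mathcal B_N}\leq \frac{30}{\sqrt N\log N}$. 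With that replacement your proof goes through.
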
 The proof of this lemma is a simple application of the Markov inequality, which can be found in the Section \ref{sec:0-delay}.

We next provide an overview of the proof of our main theorem. The details are presented in the section \ref{sec:proof}. The proof is based on Stein's method. As modularized in \cite{BraDai_17}, this approach includes three key ingredients: generator approximation, gradient bounds and state space collapse (SSC).

Define $e_i$ to be a $b$-dimensional vector such that the $i$th entry is $1/N$ and all other entries are zero. Furthermore, define $A_i(S)$ to be the probability that an incoming job is routed to a server with at least $i$ jobs. For convenience, define $A_0(S)=1$ and $A_{b+1}(S)=B(S),$ where $B(S)$ is the probability that an incoming job is discarded. Let $G$ be the generator of CTMC $S(t).$ Given function $g: \mathcal S \to R,$ we have
\begin{align}
G g(S) = &\sum_{i=1}^{b}\lambda N (A_{i-1}(S)-A_i(S)) (g(S + e_i) - g(S)) +  N (S_i - S_{i+1}) (g(S - e_i)-g(S)) \nonumber
\end{align}  For a bounded function $g: \mathcal S \to R,$
$$E[ G g(S) ] = 0.$$

Following the framework of Stein's method, the first step of our proof is generator approximation. We propose a simple, almost trivial, generator $L$ such that
\begin{align*}
L g(s) = g'(s) \left(-\frac{\log N}{\sqrt{N}}\right),
\end{align*} and assume $g(s)$ is the solution of the following Stein's equation (also called Poisson equation):
\begin{align*}
L g(s) = g'(s) \left(-\frac{\log N}{\sqrt{N}}\right)=h(s).
\end{align*}
Following Stein's method, we bound $E[h(s)]$ by studying generator difference between $L$ and $G:$
\begin{align*}
E[h(S)] =& E[L g(S) - Gg(S)] = E[g'(S)  \left(-\frac{\log N}{\sqrt{N}}\right) - Gg(S)] \\
        =& E\left[g'(S)\left(\lambda B(S)-\lambda-\frac{\log N}{\sqrt{N}}+S_1\right)+\frac{c}{N}g''(S)\right]
\end{align*} for some constant $c>0.$
The second term  can be bounded by using the gradient bound on $g''(s).$ We will see that $g''(s)$ has a very simple form and is trivial to calculate. The first term can be bounded by bounding the generator difference, which is established based on SSC in the regime $\sum_{i=1}^{b} S_i\geq \lambda+\frac{k\log N}{\sqrt{N}}.$

\section{Proof of Theorem \ref{Thm:main}} \label{sec:proof}
In this section, we present the proof of our main theorem, which is organized along the three key ingredients.

\subsection{Generator Approximation based on Stein's Method}

Define  function $h(s) = \max\left\{s- \lambda -\frac{k\log N}{\sqrt{N}},0\right\}.$  This function is motivated by \cite{Sto_15_2}, which uses a smooth function to approximate L1 distance $| \cdot |$ and proves the tightness of the ``$N$-system" in the Halfin-Whitt regime. We choose $h(s) = \max\left\{s- \lambda -\frac{k\log N}{\sqrt{N}},0\right\}$ to study the probability that  $\sum_{i=1}^b S_i \geq \lambda +\frac{k\log N}{\sqrt{N}}.$

We use a simple generator $L$ such that $L g(s)=g'(s)\left(-\frac{\log N}{\sqrt{N}}\right),$  and consider function $g$ such that
\begin{align*}
L g(s) = g'(s) \left(-\frac{\log N}{\sqrt{N}}\right)= \max\left\{s- \lambda -\frac{k\log N}{\sqrt{N}},0\right\}, ~ \text{with}~ g(0) = 0.
\end{align*}

From the definition of $g$ function, note that for any $s\leq  \lambda +\frac{k\log N}{\sqrt{N}},$
\begin{equation*}
g(s)=0,
\end{equation*} because
\begin{equation*}
g'(s)=0,
\end{equation*} for any $s\leq  \lambda +\frac{k\log N}{\sqrt{N}}.$ It implies for any $s \leq \lambda +\frac{k\log N}{\sqrt{N}}-\frac{1}{N},$
$$g(s+\frac{1}{N})=g(s)=g(s-\frac{1}{N})=0.$$

Leting $s= \sum_{i=1}^{b} S_i$, we have
\begin{align*}
&E\left[h\left(\sum_{i=1}^{b} S_i\right)\right] \\
=&E\left[g'\left(\sum_{i=1}^{b} S_i\right)\left(-\frac{\log N}{\sqrt{N}}\right)\right] - E\left[ G g\left(\sum_{i=1}^{b} S_i\right) \right] \\
=&E\left[g'\left(\sum_{i=1}^{b} S_i\right)\left(-\frac{\log N}{\sqrt{N}}\right)\right.\\
&\left.-N\lambda(1-B(S))\left(g\left(\sum_{i=1}^{b} S_i+\frac{1}{N}\right)-g\left(\sum_{i=1}^{b} S_i\right)\right)-NS_1\left(g\left(\sum_{i=1}^{b} S_i-\frac{1}{N}\right)-g\left(\sum_{i=1}^{b} S_i\right)\right)\right].
\end{align*}

Note $E[h(\sum_{i=1}^{b} S_i)]=0,$ for any $\sum_{i=1}^{b} S_i<  \lambda +\frac{k\log N}{\sqrt{N}}- \frac{1}{N}.$ Therefore, by Taylor's expansion of the function $g$ in the interval $\left( \lambda +\frac{k\log N}{\sqrt{N}}  + \frac{1}{N}, ~\infty \right)$ and the mean-value theorem of that in $[ \lambda +\frac{k\log N}{\sqrt{N}} - \frac{1}{N}, \lambda +\frac{k\log N}{\sqrt{N}} + \frac{1}{N}],$ we have
\begin{align}
&E\left[\max\left\{\sum_{i=1}^{b} S_i- \lambda -\frac{k\log N}{\sqrt{N}},0\right\}\right] \nonumber \\
\leq &E\left[g'\left(\sum_{i=1}^{b} S_i\right)\left(\lambda B(S)- \lambda-\frac{\log N}{\sqrt{N}}+S_1\right)\mathbb{I}_{\sum_{i=1}^{b} S_i> \lambda +\frac{k\log N}{\sqrt{N}} +\frac{1}{N}}\right] +\frac{2}{N}\max_{\sum_{i=1}^{b} S_i >  \lambda +\frac{k\log N}{\sqrt{N}}  }|g''(S)| \nonumber\\
&+E\left[\left(g'\left(\sum_{i=1}^{b} S_i\right)\left(-\frac{\log N}{\sqrt{N}}\right)+|\lambda-\lambda B(S)|g'(\xi)+|S_1|g'(\tilde{\xi})\right)\mathbb{I}_{-\frac{1}{N}\leq \sum_{i=1}^{b} S_i -\lambda -\frac{k\log N}{\sqrt{N}}  \leq \frac{1}{N}}\right], \label{G-expansion}
\end{align} where $\xi\in\left(\sum_{i=1}^{b} S_i, \sum_{i=1}^{b} S_i+\frac{1}{N}\right)$ and $\tilde{\xi}\in\left(\sum_{i=1}^{b} S_i-\frac{1}{N}, \sum_{i=1}^{b} S_i\right).$

In the following, we study $g'$ and $g''$ to bound the last two terms in the inequality \eqref{G-expansion}.

\subsection{Gradient Bounds}
We summarize bounds on $g'$ and $g''$ in the following two lemmas.

\begin{lemma}\label{lemma:g'}
Given  $s\in\left[ \lambda +\frac{k\log N}{\sqrt{N}}  -\frac{2}{N}, \lambda +\frac{k\log N}{\sqrt{N}}  +\frac{2}{N}\right],$ we have $$|g'(s)|\leq \frac{2}{\sqrt{N}\log N}.$$
\end{lemma}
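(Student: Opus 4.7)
The plan is to compute $g'$ explicitly from its defining ODE and then read off the bound on the specified interval. Since $g$ satisfies $g'(s)\bigl(-\log N/\sqrt{N}\bigr)=\max\{s-\lambda-k\log N/\sqrt{N},0\}$ with $g(0)=0$, we get a closed form for $g'$ by simple algebra: on $s\le\lambda+k\log N/\sqrt{N}$ the right-hand side is zero, forcing $g'(s)=0$, while on $s>\lambda+k\log N/\sqrt{N}$ we divide through by $-\log N/\sqrt{N}$ to obtain
\begin{equation*}
g'(s) \;=\; -\frac{\sqrt{N}}{\log N}\left(s-\lambda-\frac{k\log N}{\sqrt{N}}\right).
\end{equation*}

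With this explicit form in hand, the bound follows by a two-case analysis on the interval $\left[\lambda+\frac{k\log N}{\sqrt{N}}-\frac{2}{N},\;\lambda+\frac{k\log N}{\sqrt{N}}+\frac{2}{N}\right]$. First I would handle the left half-interval, where $g'(s)=0$ and so $|g'(s)|=0\le 2/(\sqrt{N}\log N)$ trivially. Then on the right half-interval, I would use the closed form to bound
\begin{equation*}
|g'(s)| \;=\; \frac{\sqrt{N}}{\log N}\left(s-\lambda-\frac{k\log N}{\sqrt{N}}\right) \;\le\; \frac{\sqrt{N}}{\log N}\cdot\frac{2}{N} \;=\; \frac{2}{\sqrt{N}\log N},
\end{equation*}
since $s-\lambda-k\log N/\sqrt{N}\le 2/N$ in this range. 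Combining the two cases gives the claim.

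There is really no obstacle here; the lemma is a direct consequence of the fact that $g'$ is linear with slope $\sqrt{N}/\log N$ just above the kink at $\lambda+k\log N/\sqrt{N}$ and vanishes just below, so multiplying the slope by the interval half-width $2/N$ produces the stated bound. The only thing to be careful about is that $g'$ is continuous at the kink (it equals $0$ there from both sides), which justifies the single uniform bound over the closed interval rather than needing to treat the boundary separately.
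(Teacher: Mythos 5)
Your proof is correct and follows exactly the same route as the paper: solve the Stein equation $g'(s)(-\log N/\sqrt{N})=\max\{s-\lambda-k\log N/\sqrt{N},0\}$ for $g'$ explicitly, then bound $|g'(s)|$ by $\frac{2/N}{\log N/\sqrt{N}}=\frac{2}{\sqrt{N}\log N}$ on the given interval. The only difference is that you spell out the two cases (below and above the kink) separately, which the paper's one-line argument absorbs into the $\max\{\cdot,0\}$ expression.
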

\begin{proof}
Note that
$$g'(s)=\frac{\max\left\{s-\lambda -\frac{k\log N}{\sqrt{N}}  ,0\right\}}{-\frac{\log N}{\sqrt{N}}}.$$
Hence, we have
$$|g'(s)|\leq \frac{\frac{2}{N}}{\frac{\log N}{\sqrt{N}}}=\frac{2}{\sqrt{N}\log N}.$$
\end{proof}

\begin{lemma}\label{lemma:g''}
For $s>\lambda +\frac{k\log N}{\sqrt{N}},$ we have \begin{align*}
|g''(s)|\leq \frac{\sqrt{N}}{\log N}.
\end{align*}
\end{lemma}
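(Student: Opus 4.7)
The proof should be essentially a one-line direct calculation from the definition of $g$, so my plan is to simply differentiate the explicit formula for $g'(s)$ twice and read off the bound.

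First I would rewrite the defining equation $g'(s)\left(-\frac{\log N}{\sqrt{N}}\right)=\max\{s-\lambda-\frac{k\log N}{\sqrt{N}},0\}$ as
\begin{equation*}
g'(s) \;=\; -\frac{\sqrt{N}}{\log N}\,\max\!\left\{s-\lambda-\frac{k\log N}{\sqrt{N}},\,0\right\},
\end{equation*}
which is already the formula used in the proof of Lemma \ref{lemma:g'}. On the region $s>\lambda+\frac{k\log N}{\sqrt{N}}$, the max equals its first argument, so $g'(s)$ is an affine function of $s$ with slope $-\frac{\sqrt{N}}{\log N}$.

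Then I would differentiate once more: on that region $g''(s)=-\frac{\sqrt{N}}{\log N}$ identically, and taking absolute values yields $|g''(s)|=\frac{\sqrt{N}}{\log N}$, which matches the claimed bound with equality. No obstacle arises: the only subtlety is that $g'$ is not twice differentiable at $s=\lambda+\frac{k\log N}{\sqrt{N}}$ itself, but the lemma only asserts the bound on the open region $s>\lambda+\frac{k\log N}{\sqrt{N}}$, so this kink is irrelevant. This is precisely why the authors chose the particularly simple generator $L g(s)=g'(s)(-\log N/\sqrt{N})$: the Stein equation has a piecewise-linear right-hand side, making $g'$ piecewise-linear and $g''$ piecewise-constant, so the gradient bound is essentially immediate and avoids the delicate PDE estimates that typically dominate Stein's method arguments.
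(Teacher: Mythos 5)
Your proposal is correct and is essentially identical to the paper's proof: the authors likewise solve the Stein equation explicitly for $g'(s)=-\frac{\sqrt{N}}{\log N}\max\{s-\lambda-\frac{k\log N}{\sqrt{N}},0\}$ and observe that on the region $s>\lambda+\frac{k\log N}{\sqrt{N}}$ its derivative is constant with $|g''(s)|=\frac{\sqrt{N}}{\log N}$. Your remark about the kink at the boundary point and about why this choice of $L$ trivializes the gradient bound is accurate, though the paper does not spell it out.
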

\begin{proof}
For $s> \lambda +\frac{k\log N}{\sqrt{N}}$ we have
\begin{eqnarray*}
|g''(s)|=\left|\frac{\sqrt{N}}{\log N}\right|=\frac{\sqrt{N}}{\log{N}}.
\end{eqnarray*}
\end{proof}

\subsection{State Space Collapse (SSC)}

In this section, we consider the first term in \eqref{G-expansion}
\begin{align}
&g'\left(\sum_{i=1}^{b} S_i\right)\left(\lambda B(s)-\lambda-\frac{\log N}{\sqrt{N}} +S_1\right)\mathbb{I}_{\sum_{i=1}^{b} S_i> \lambda+\frac{k\log N}{\sqrt{N}} +\frac{1}{N}} \nonumber\\
=&-\frac{\sqrt{N}}{\log N}\left(\sum_{i=1}^{b} S_i-\lambda-\frac{k\log N}{\sqrt{N}} \right)\left(\lambda B(s)-\lambda- \frac{\log N}{\sqrt{N}} + S_1\right)\mathbb{I}_{\sum_{i=1}^{b} S_i> \lambda+\frac{k\log N}{\sqrt{N}} +\frac{1}{N}} \nonumber\\
= &\frac{\sqrt{N}}{\log N}\left(\sum_{i=1}^{b} S_i-\lambda-\frac{k\log N}{\sqrt{N}} \right)\left(\lambda+ \frac{\log N}{\sqrt{N}} - S_1-\lambda B(s)\right)\mathbb{I}_{\sum_{i=1}^{b} S_i> \lambda+\frac{k\log N}{\sqrt{N}} +\frac{1}{N}} \nonumber\\
\leq &\frac{\sqrt{N}}{\log N}\left(\sum_{i=1}^{b} S_i-\lambda-\frac{k\log N}{\sqrt{N}} \right)\left(\lambda+ \frac{\log N}{\sqrt{N}} - S_1\right)\mathbb{I}_{\sum_{i=1}^{b} S_i> \lambda+\frac{k\log N}{\sqrt{N}} +\frac{1}{N}}. \label{SSC}
\end{align}
where the last inequality holds because $\left(\sum_{i=1}^{b} S_i-\lambda-\frac{k\log N}{\sqrt{N}}\right)\mathbb{I}_{\sum_{i=1}^{b} S_i>\lambda+\frac{k\log N}{\sqrt{N}}+\frac{1}{N}}\geq 0.$

With minor abuse of notation, now let $s=[s_1,s_2,\cdots, s_b].$ We define a Lyapunov function
\begin{align}
V(s)=\min\left\{\sum_{i=2}^{b} s_i, \lambda+\frac{k\log N}{\sqrt{N}} -s_1 \right\}. \label{ly-func}
\end{align}

\begin{lemma}\label{DriftBound}
For sufficient large $N,$ we have
\begin{align*}
\triangledown V(s)\leq -\frac{1}{2(b-1)}\frac{\log N}{\sqrt{N}}+\frac{1}{\sqrt{N}},
\end{align*}
for any $s$ such that
\begin{align*}
V(s)\geq \frac{\log N}{\sqrt{N}}.
\end{align*}
\end{lemma}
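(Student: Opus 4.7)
The strategy is to interpret $\triangledown V(s)$ as the generator drift $GV(s)$, write $V = \min(f_1, f_2)$ with
\[
f_1(s) = \sum_{i=2}^{b} s_i, \qquad f_2(s) = \lambda + \frac{k\log N}{\sqrt N} - s_1,
\]
and split into two cases by which linear function achieves the minimum. The key structural observation is that, since $V(s') = \min(f_1(s'), f_2(s'))\leq f_i(s')$ for every $s'$, whenever $V(s) = f_i(s)$ we immediately get $GV(s)\leq Gf_i(s)$. I want to emphasize that this inequality holds for every transition $s\to s'$ of the CTMC, even transitions that cross the boundary $\{f_1 = f_2\}$, because the inequality $V(s')\leq f_i(s')$ does not require $f_i$ to be the active branch at $s'$.

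Next I would compute $Gf_1$ and $Gf_2$ directly from the expression for $G$ given before Theorem~\ref{Thm:main}. Using $A_0(s)=1$, $s_{b+1}=0$, and the telescoping over $i$, the contributions of arrivals and departures simplify to
\[
Gf_1(s) = \lambda\bigl(A_1(s) - A_b(s)\bigr) - s_2, \qquad Gf_2(s) = -\lambda\bigl(1 - A_1(s)\bigr) + (s_1 - s_2).
\]

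In Case~1 ($V(s) = f_1(s)\leq f_2(s)$), the inequality $V(s)\geq 0$ already forces $s_1\leq \lambda + \tfrac{k\log N}{\sqrt N}$, so the hypothesis on the load balancing algorithm yields $A_1(s)\leq 1/\sqrt N$. Combining the monotonicity $s_2\geq s_3\geq\cdots\geq s_b$ with $\sum_{i=2}^{b} s_i = V(s)\geq \tfrac{\log N}{\sqrt N}$ gives $s_2\geq \tfrac{\log N}{(b-1)\sqrt N}$, and hence $Gf_1(s)\leq \tfrac{\lambda}{\sqrt N} - \tfrac{\log N}{(b-1)\sqrt N}$, which is within the claimed bound. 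In Case~2 ($V(s) = f_2(s)\leq f_1(s)$), I would use $V(s)\geq \tfrac{\log N}{\sqrt N}$ to deduce $s_1 \leq \lambda + \tfrac{(k-1)\log N}{\sqrt N} = \lambda + \tfrac{\log N}{2(b-1)\sqrt N}$ (this is exactly where the specific choice $k = 1 + \tfrac{1}{2(b-1)}$ is used). Since $f_2(s)\leq f_1(s) = \sum_{i\geq 2} s_i\leq (b-1)s_2$, I again get $s_2\geq \tfrac{\log N}{(b-1)\sqrt N}$, and again $s_1\leq \lambda + \tfrac{k\log N}{\sqrt N}$ so $A_1(s)\leq 1/\sqrt N$. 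Substituting into $Gf_2$ then produces the bound $-\tfrac{\log N}{2(b-1)\sqrt N} + \tfrac{1}{\sqrt N}$.

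The main thing to get right, and the only non-routine piece, is the generator-monotonicity reduction $GV\leq Gf_i$ on the active branch; this is what lets one avoid any smoothing of the $\min$. The remaining work is bookkeeping: verifying the condition $s_1\leq \lambda + \tfrac{k\log N}{\sqrt N}$ (needed to invoke $A_1(s)\leq 1/\sqrt N$) in both cases, tracking the constant $k-1 = \tfrac{1}{2(b-1)}$, and using the sortedness of $s$ to turn the global lower bound $V(s)\geq \tfrac{\log N}{\sqrt N}$ into the pointwise lower bound on $s_2$ that cancels the positive contribution $\lambda/\sqrt N$ from arrivals.
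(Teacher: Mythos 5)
Your proposal is correct and follows essentially the same route as the paper: the paper's case split on which branch of the $\min$ is active, together with its listed inequalities $V(s\pm e_j)\leq f_i(s\pm e_j)$ at the active branch $f_i$, is exactly your generator-monotonicity reduction $GV\leq Gf_i$, and the subsequent computations of $Gf_1$, $Gf_2$, the bound $s_2\geq \frac{\log N}{(b-1)\sqrt N}$, and the use of $k-1=\frac{1}{2(b-1)}$ all match. The only cosmetic difference is that the paper bounds the arrival term in Case 1 by $\lambda(A_1-B)$ rather than your $\lambda(A_1-A_b)$, which is immaterial since both are at most $\lambda A_1\leq \frac{1}{\sqrt N}$.
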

\begin{proof}

For the Lyapunov function defined in (\ref{ly-func}), the Lyapunov drift is
\begin{align*}
&\triangledown V(s)=E\left[GV(S)|S=s\right]\\
= &\sum_{i=1}^{b}\lambda N (A_{i-1}(s)-A_i(s)) (V(s + e_i) - V(s)) +  N (s_i - s_{i+1}) (V(s - e_i)-V(s)). \nonumber
\end{align*}

Given $V(s)\geq \frac{\log N}{\sqrt{N}},$ we consider the following two cases.
\begin{itemize}
\item Case 1: Assume $\sum_{i=2}^{b} s_i\leq \lambda+\frac{k\log N}{\sqrt{N}}-s_1.$ Note that
\begin{align*} 
&V(s+e_1)\leq \sum_{i=2}^{b} s_i, ~ V(s-e_1) = \sum_{i=2}^{b} s_i,\\
&V(s+e_j)\leq \sum_{i=2}^{b} s_i+\frac{1}{N}, ~V(s-e_j) =  \sum_{i=2}^{b} s_i - \frac{1}{N}, ~ \forall ~ 2 \leq j \leq b.  
\end{align*}

Furthermore,
$V(s)=\sum_{i=2}^{b} s_i\geq \frac{\log N}{\sqrt{N}},$ which implies $s_2\geq \frac{1}{b-1}\frac{\log N}{\sqrt{N}}$ because $s_2\geq s_3\geq \cdots \geq s_b.$  Therefore, we have
\begin{align*}
\triangledown V(s)\leq\lambda(A_1(s)- B(s))-s_2\leq - \frac{1}{b-1}\frac{\log N}{\sqrt{N}} + \frac{1}{\sqrt{N}},
\end{align*} where the last inequality holds because $\sum_{i=1}^{b} s_i\leq \lambda+\frac{k\log N}{\sqrt{N}}$ implies that $s_1<\lambda+\frac{k\log N}{\sqrt{N}}$ which further implies that $A_1(s)\leq \frac{1}{\sqrt{N}}.$

\item Case 2: Assume $\sum_{i=2}^{b} s_i>\lambda+\frac{k\log N}{\sqrt{N}}-s_1.$ Note that 
\begin{align*} 
&V(s + e_1) = \lambda+\frac{k\log N}{\sqrt{N}}-s_1- \frac{1}{N}, ~ V(s - e_1)\leq \lambda+\frac{k\log N}{\sqrt{N}}-s_1+\frac{1}{N}, \\
&V(s + e_j) = \lambda+\frac{k\log N}{\sqrt{N}}-s_1, ~ V(s- e_j)\leq \lambda+\frac{k\log N}{\sqrt{N}}-s_1, ~\forall~ 2 \leq j \leq b. 
\end{align*}

In this case
$\sum_{i=2}^b s_i\geq V(s)=\lambda+\frac{k\log N}{\sqrt{N}}-s_1\geq \frac{\log N}{\sqrt{N}},$ which also implies $s_2\geq \frac{1}{b-1}\frac{\log N}{\sqrt{N}}.$
\begin{align*}
\triangledown V(s)\leq&-\lambda(1-A_1(s))+(s_1-s_2)\\
=& s_1-s_2-\lambda+\lambda A_1(s)\\
\leq& (k-1)\frac{\log N}{\sqrt{N}}-s_2+\lambda A_1(s)\\
\leq& \left(k-1-\frac{1}{b-1}\right)\frac{\log N}{\sqrt{N}}+ \frac{1}{\sqrt{N}} \\
=& -\frac{1}{2(b-1)}\frac{\log N}{\sqrt{N}}+ \frac{1}{\sqrt{N}}
\end{align*}
where the second last inequality holds because $s_1\leq \lambda+(k-1)\frac{\log N}{\sqrt{N}}$ and the last equality holds because $k = 1+\frac{1}{2(b-1)}.$
\end{itemize}

\end{proof}

Before moving forward, we present the following result from \cite{BerGamTsi_01}. The following version of the lemma is from \cite{WanMagSri_17}, but the result was proven in \cite{BerGamTsi_01}.
\begin{lemma} \label{TailBound}
Let $(X (t): t \geq 0)$ be a continuous-time Markov chain over a countable state space $X$. Suppose that it is irreducible, nonexplosive and positive-recurrent, and it converges in distribution to a random variable $\bar X.$ Consider a Lyapunov function $V: X \to R^{+}$ and define the drift of $V$ at a state $i \in \mathcal X$ as $$\Delta V(i) = \sum_{i' \in \mathcal X: i' \neq i} q_{i i'} (V(i') - V(i)),$$ where $q_{ii'}$ is the transition rate from $i$ to $i'.$ Suppose that the drift
satisfies the following conditions:

(i) There exists constants $\gamma > 0$ and $B > 0$ such that $\Delta V (i) \leq -\gamma$ for any $i \in X$ with $V(i) > B.$

(ii) $\nu_{\max} := \sup\limits_{i,i'\in \mathcal X: q_{i i'} >0} |V(i') - V(i)|< \infty.$

(iii) $\bar q := \sup\limits_{i \in \mathcal X} (-q_{ii}) < \infty.$

Then for any nonnegative integer $j$, we have
$$\Pr\left(V(\bar X) > B + 2 \nu_{\max} j\right) \leq \left(\frac{q_{\max}\nu_{\max}}{q_{\max}\nu_{\max} + \gamma}\right)^{j+1},$$ where $$q_{\max}=\sup\limits_{i \in \mathcal X} \sum_{i' \in \mathcal X: V(i) < V(i')} q_{ii'}.$$
\end{lemma}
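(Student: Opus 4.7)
The plan is to establish the tail bound by a stationary flow-balance argument on the level sets of $V$. Writing $L_j := B + 2\nu_{\max} j$ and $\pi_j := \pi(\{i : V(i) > L_j\})$ for the stationary measure $\pi$ of $\bar X$, the target is the one-step recursion $\pi_j \leq \rho\,\pi_{j-1}$ with $\rho = q_{\max}\nu_{\max}/(q_{\max}\nu_{\max}+\gamma)$, which iterates (with base case $\pi_0 \leq \rho$ obtained by the same balance applied at the first level, starting from $\pi(\mathcal X)=1$) to the claimed $\pi_j \leq \rho^{j+1}$.

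First I would invoke Foster-Lyapunov with hypotheses (i)-(iii) to confirm that $\pi$ exists and places finite mass on $\{V > B\}$. The structural observation driving the spacing $2\nu_{\max}$ in the statement is condition (ii): because a single jump moves $V$ by at most $\nu_{\max}$, any trajectory entering $\{V > L_j\}$ from below must pass through the buffer band $\mathcal B_j := \{L_j - \nu_{\max} < V \leq L_j\}$, which is disjoint from $\{V > L_j\}$ itself. This gap cleanly separates the ``up-crossing source region'' from the ``drift-dominated interior''.

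The quantitative step is to apply the stationary identity $\sum_i \pi(i)\sum_{i'} q_{ii'}(f(i')-f(i))=0$ with the truncated linear test function $f(i) := \min(\max(V(i)-L_{j-1},0),\, 2\nu_{\max})$, which is zero below $L_{j-1}$, linear in between, and saturated at $2\nu_{\max}$ above $L_j$. States in the ``saturated interior'' $\{V > L_j + \nu_{\max}\}$ have $f(i)=f(i')=2\nu_{\max}$ for every reachable neighbor and so contribute nothing; states in the intermediate region $\{L_{j-1}+\nu_{\max} < V \leq L_j + \nu_{\max}\}$ see $f$ locally affine, so condition (i) yields a net downward flow of magnitude at least $\gamma\pi_j$; and states in the buffer band $\{L_{j-1} < V \leq L_{j-1}+\nu_{\max}\}$ contribute at most $q_{\max}\nu_{\max}(\pi_{j-1}-\pi_j)$ of upward flow, since their outgoing rate to higher-$V$ states is at most $q_{\max}$ and each such jump moves $f$ by at most $\nu_{\max}$. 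Equating the downward and upward sides yields $\gamma \pi_j \leq q_{\max}\nu_{\max}(\pi_{j-1}-\pi_j)$, hence the recursion.

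The main obstacle is the careful accounting at states within $\nu_{\max}$ of the kinks of $f$, where a single jump can straddle a kink and the drift condition $\Delta V \leq -\gamma$ does not transfer directly into a bound on $\sum_{i'} q_{ii'}(f(i')-f(i))$. I would absorb these corrections into the right-hand $q_{\max}\nu_{\max}$ term using the bounded-jump hypothesis and the sign structure of the kink (which only clips upward contributions). If the direct flow-balance bookkeeping proves cumbersome, a clean fallback is to uniformize the chain using $\bar q < \infty$ from condition (iii), obtaining a discrete-time chain with bounded increments $\leq \nu_{\max}$ and a strict negative drift $\leq -\gamma/\bar q$ on $\{V > B\}$, and then apply a Hajek-type exponential-moment bound; optimizing the exponential parameter against the stated spacing $2\nu_{\max}$ recovers the geometric rate $\rho$.
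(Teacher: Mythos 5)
First, a point of reference: the paper does not prove this lemma at all --- it is imported verbatim from \cite{BerGamTsi_01} (in the form stated in \cite{WanMagSri_17}), so there is no in-paper proof to compare against. Your overall strategy is nonetheless the standard one behind the cited result: a stationary flow-balance (generator) identity applied to a piecewise-linear function of $V$, yielding a one-step recursion $\pi_j \leq \rho\,\pi_{j-1}$ on the level-set probabilities with base case $\pi_0\le\rho$, and the skeleton of your argument (Foster--Lyapunov for existence of $\pi$, bounded jumps forcing up-crossings through a $\nu_{\max}$-band, geometric iteration) is correct.

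The genuine gap is in the quantitative step, and it is structural rather than bookkeeping. With your two-kink test function $f=\min(\max(V-L_{j-1},0),2\nu_{\max})$ and spacing $L_j-L_{j-1}=2\nu_{\max}$, the $\nu_{\max}$-neighborhoods of the two kinks cover the entire band $(L_{j-1},L_j]$ (indeed the ``locally affine'' interior $(L_{j-1}+\nu_{\max},\,L_j-\nu_{\max}]$ is empty), so there is no region where the drift condition transfers cleanly. Concretely: (a) at the upper kink, a state with $V(i)\in(L_j,L_j+\nu_{\max}]$ has $f(i)=2\nu_{\max}$ and its \emph{downward} jumps are clipped, so it contributes only $\le 0$, not $\le-\gamma$; your claim that the kink ``only clips upward contributions'' is false there, and you lose the $-\gamma\pi_j$ harvest on part of $\{V>L_j\}$; (b) at the lower kink, states with $V(i)\in(L_{j-1}-\nu_{\max},L_{j-1}]$ have $f(i)=0$ but can jump to $f(i')>0$, so the positive side is controlled by $\Pr(L_{j-1}-\nu_{\max}<V\le L_{j-1}+\nu_{\max})$, which is not bounded by $\pi_{j-1}-\pi_j$. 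Both defects break the recursion $\gamma\pi_j\le q_{\max}\nu_{\max}(\pi_{j-1}-\pi_j)$ as written. The standard repair is to use the single-kink function $f=\max\{V,\,B+(2j-1)\nu_{\max}\}$: its kink sits at the midpoint of the band, so its $\nu_{\max}$-neighborhood is exactly $(L_{j-1},L_j]$ (positive contributions $\le q_{\max}\nu_{\max}(\pi_{j-1}-\pi_j)$) while every state with $V>L_j$ sees $f$ affine on its whole jump range and contributes the full $\le-\gamma$; since this $f$ is unbounded you must additionally justify $E[Gf(\bar X)]=0$ by truncating at a high level and passing to the limit. Finally, your uniformization/Hajek fallback would give \emph{an} exponential tail but not the specific constant $\left(\frac{q_{\max}\nu_{\max}}{q_{\max}\nu_{\max}+\gamma}\right)^{j+1}$ claimed in the lemma, so it cannot substitute for the flow-balance argument here.
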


Based on drift analysis in Lemma \ref{DriftBound} and Lemma \ref{TailBound} (Lemma B.1 in \cite{WanMagSri_17}), we have the following tail bound on $V(S)$.

\begin{lemma} \label{lem:SSC}
Given the Lyaponuv function defined in \eqref{ly-func} and denote $\tilde{k} = 1+\frac{1}{4(b-1)}$, we have $$\Pr\left(V(S)\geq \frac{\tilde{k}\log N}{\sqrt{N}}\right)\leq e^{-\frac{\log^2 N}{32(b-1)^2}+\frac{\log N}{16(b-1)}}.$$
\end{lemma}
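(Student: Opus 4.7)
The plan is to invoke the generic tail bound of Lemma \ref{TailBound} on the Lyapunov function $V$ from \eqref{ly-func}, using Lemma \ref{DriftBound} as the drift engine. All I would need is to identify the four parameters $B$, $\gamma$, $\nu_{\max}$, $q_{\max}$ required by Lemma \ref{TailBound}, pick an integer $j$ that shifts the base threshold $B$ up to $\tfrac{\tilde k\log N}{\sqrt N}$, and read off the resulting exponential bound.

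First I would read off the parameters. Lemma \ref{DriftBound} directly supplies $B = \tfrac{\log N}{\sqrt N}$ and $\gamma = \tfrac{1}{2(b-1)}\tfrac{\log N}{\sqrt N} - \tfrac{1}{\sqrt N}$; the assumption $b = o(\sqrt{\log N})$ ensures $\gamma>0$ for large $N$. Since every transition changes a single coordinate of $s$ by $\pm\tfrac{1}{N}$, and $V$ is the pointwise minimum of two affine functions of $s$ whose coefficients lie in $\{-1,0,1\}$, one immediately has $\nu_{\max} = \tfrac{1}{N}$. The rate constant $q_{\max}$ is where the analysis is less automatic: I would split the state space by the sign of $\sum_{i=2}^b s_i - \bigl(\lambda + \tfrac{k\log N}{\sqrt N}-s_1\bigr)$. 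On one side only arrivals landing on a server with at least one job can raise $V$, with total rate at most $\lambda N$; on the other side only departures from servers of queue length exactly one can raise $V$, with total rate at most $N s_1 \leq N$. Since these regions are disjoint, $q_{\max}\leq N$, and therefore $q_{\max}\nu_{\max}\leq 1$.

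Next I would pick $j$ so that $B + 2\nu_{\max} j$ reaches $\tfrac{\tilde k\log N}{\sqrt N}$, giving $j = \tfrac{\sqrt N\log N}{8(b-1)}$ up to integer rounding. Plugging into Lemma \ref{TailBound} and using the elementary inequality $\tfrac{1}{1+y} \leq e^{-y/2}$ on $[0,1]$ yields an upper bound of the form $e^{-j\gamma/2}$; multiplying the chosen $j$ by the above $\gamma$ produces the exponent $-\tfrac{\log^2 N}{32(b-1)^2} + \tfrac{\log N}{16(b-1)}$, which matches the statement exactly.

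The main obstacle I expect is the bound $q_{\max}\nu_{\max}\leq 1$. A naive ``add all transition rates'' estimate gives only $q_{\max}\nu_{\max}\leq 2$, which loses a factor of two in the leading exponent and yields $\tfrac{1}{64}$ instead of the claimed $\tfrac{1}{32}$. It is the mutual exclusion of the two mechanisms that can raise $V$ --- an arrival-side mechanism in one half of state space, a departure-side mechanism in the other --- that closes this gap and pins down the constant. The remaining bookkeeping is minor: handling the integer rounding in the choice of $j$, and imposing the non-negativity requirement of Lemma \ref{TailBound} by working with $V^+ = \max(V,0)$, which inherits the relevant drift and jump bounds from $V$.
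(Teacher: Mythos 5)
Your proposal follows essentially the same route as the paper: it instantiates Lemma \ref{TailBound} with $B=\frac{\log N}{\sqrt N}$, $\gamma=\frac{1}{2(b-1)}\frac{\log N}{\sqrt N}-\frac{1}{\sqrt N}$, $\nu_{\max}=\frac1N$, $q_{\max}\leq N$, and $j=\frac{\sqrt N\log N}{8(b-1)}$, and the resulting exponent matches. Your extra care on the two points the paper dismisses as ``easy to verify'' --- the mutual exclusion argument giving $q_{\max}\nu_{\max}\leq 1$ rather than $2$, and replacing $V$ by $\max(V,0)$ to meet the nonnegativity hypothesis --- is correct and welcome, but does not change the argument in substance.
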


\begin{proof}
From Lemma \ref{DriftBound}, we have
\begin{align*}
B=\frac{\log N}{\sqrt{N}} ~\text{and}~ \gamma=\frac{1}{2(b-1)}\frac{\log N}{\sqrt{N}}-\frac{1}{\sqrt{N}},
\end{align*}
and it is easy to verify
\begin{align*}
q_{\max}\leq N ~\text{and}~ v_{\max}\leq \frac{1}{N}.
\end{align*}

Based on Lemma \ref{TailBound}, we have for sufficiently large $N,$
\begin{align*}
\Pr\left(V(S)\geq \frac{\tilde{k}\log N}{\sqrt{N}}\right) \leq &\left(\frac{1}{1+\frac{1}{2(b-1)}\frac{\log N}{\sqrt{N}}-\frac{1}{\sqrt{N}}}\right)^{\frac{\sqrt{N}\log N}{8(b-1)}+1}\\
\leq& \left(1 - \frac{1}{4(b-1)}\frac{\log N}{\sqrt{N}}+\frac{1}{2\sqrt{N}}\right)^{\frac{\sqrt{N}\log N}{8(b-1)}+1}\\ \leq& e^{-\frac{\log^2 N}{32(b-1)^2}+\frac{\log N}{16(b-1)}}.
\end{align*}

\end{proof}

Given the gradient bounds in Lemma \ref{lemma:g'} and Lemma \ref{lemma:g''}, and SSC in Lemma \ref{lem:SSC}, we are ready to bound all the terms in \eqref{G-expansion} and establish the main theorem. Considering the first term in \eqref{G-expansion}, we have
\begin{align*}
&E\left[g'\left(\sum_{i=1}^{b} S_i\right)\left(\lambda B(S)-\lambda-\frac{\log N}{\sqrt{N}}+S_1\right)\mathbb{I}_{\sum_{i=1}^{b} S_i> \lambda +\frac{k\log N}{\sqrt{N}}+\frac{1}{N}}\right]\\
\leq&E\left[\frac{\sqrt{N}}{\log N}\left(\sum_{i=1}^{b} S_i-\lambda -\frac{k\log N}{\sqrt{N}}\right)\left(\lambda+\frac{\log N}{\sqrt{N}}-S_1\right)\mathbb{I}_{\sum_{i=1}^{b} S_i> \lambda +\frac{k\log N}{\sqrt{N}}+\frac{1}{N}}\right]\\
=&E\left[\frac{\sqrt{N}}{\log N}\left(\sum_{i=1}^{b} S_i-\lambda -\frac{k\log N}{\sqrt{N}}\right)\left(\lambda+\frac{\log N}{\sqrt{N}}-S_1\right)\mathbb{I}_{V(S)\leq \frac{\tilde{k}\log N}{\sqrt{N}}}\mathbb{I}_{\sum_{i=1}^{b} S_i> \lambda +\frac{k\log N}{\sqrt{N}}+\frac{1}{N}}\right]\\
&+E\left[\frac{\sqrt{N}}{\log N}\left(\sum_{i=1}^{b} S_i-\lambda -\frac{k\log N}{\sqrt{N}}\right)\left(\lambda+\frac{\log N}{\sqrt{N}}-S_1\right)\mathbb{I}_{V(S)>\frac{\tilde{k}\log N}{\sqrt{N}}}\mathbb{I}_{\sum_{i=1}^{b} S_i> \lambda +\frac{k\log N}{\sqrt{N}}+\frac{1}{N}}\right].
\end{align*}
Note that given $\sum_{i=1}^{b} S_i> \lambda +\frac{k\log N}{\sqrt{N}}+\frac{1}{N},$ $$V(S)=\lambda+\frac{k\log N}{\sqrt{N}}-S_1.$$ Hence,
$V(S)\leq \frac{\tilde{k} \log N}{\sqrt{N}}$ implies that 
\begin{align*}
\lambda+\frac{\log N}{\sqrt{N}}-S_1 \leq& \left(\tilde{k} - k + 1 \right) \frac{\log N}{\sqrt{N}}\\
 \leq& \left(1 - \frac{1}{4(b-1)} \right) \frac{\log N}{\sqrt{N}}.
\end{align*}
Therefore, we have
\begin{align*}
&E\left[g'\left(\sum_{i=1}^{b} S_i\right)\left(\lambda B(S)-\lambda-\frac{\log N}{\sqrt{N}}+S_1\right)\mathbb{I}_{\sum_{i=1}^{b} S_i> \lambda +\frac{k\log N}{\sqrt{N}}+\frac{1}{N}}\right]\\
\leq& \left(1 - \frac{1}{4(b-1)} \right)E\left[\max\left\{\sum_{i=1}^{b} S_i-\lambda -\frac{k\log N}{\sqrt{N}},0\right\}\right]+\frac{(b+1)\sqrt{N}}{\log N} e^{-\frac{\log^2 N}{32(b-1)^2}+\frac{\log N}{16(b-1)}}.
\end{align*}

For the second and third terms in \eqref{G-expansion}, based on Lemma \ref{lemma:g'} and Lemma \ref{lemma:g''}, we have
\begin{align*}
&\frac{2}{N}\max_{\sum_{i=1}^{b} S_i > \lambda +\frac{k\log N}{\sqrt{N}}}\left|g''\left(\sum_{i=1}^{b} S_i\right)\right| \leq \frac{2}{\sqrt{N}\log N}, \\
&E\left[\left(g'\left(\sum_{i=1}^{b} S_i\right)\left(-\frac{\log N}{\sqrt{N}}\right)+|\lambda-\lambda B(S)|g'(\xi)+|S_1|g'(\tilde{\xi})\right)\mathbb{I}_{-\frac{1}{N}\leq \sum_{i=1}^{b} S_i-\lambda -\frac{k\log N}{\sqrt{N}}\leq \frac{1}{N}}\right] \\
&\leq \frac{5}{\sqrt{N}\log N}.
\end{align*}

In summary, we have for sufficiently large $N,$
\begin{align*}
&E\left[\max\left\{\sum_{i=1}^{b} S_i-\lambda -\frac{k\log N}{\sqrt{N}},0\right\}\right]\\
\leq & \left(1 - \frac{1}{4(b-1)}\right) E\left[\max\left\{\sum_{i=1}^{b} S_i-\lambda -\frac{k\log N}{\sqrt{N}},0\right\}\right]+\frac{(b+1)\sqrt{N}}{\log N} e^{-\frac{\log^2 N}{32(b-1)^2}+\frac{\log N}{16(b-1)}}+\frac{7}{\sqrt{N}\log N},
\end{align*} which implies that
\begin{align*}
E\left[\max\left\{\sum_{i=1}^{b} S_i-\lambda -\frac{k\log N}{\sqrt{N}},0\right\}\right]\leq \frac{29(b-1)}{\sqrt{N}\log N}
\end{align*} when $N$ is sufficiently large.

\section{Proof of the Corollary} \label{sec:0-delay}
Under JSQ, a job is discarded or blocked only if all buffers are full, i.e. when $\sum_{i=1}^b S_i=b.$  From Theorem \ref{Thm:main},
\begin{align*}
p_{\mathcal B_N} = &\Pr\left(\sum_{i=1}^b S_i=b\right)=\Pr\left(\sum_{i=1}^b S_i\geq b\right)\\
\leq &\Pr\left(\max\left\{\sum_{i=1}^b S_i-\lambda-\frac{k\log N}{\sqrt{N}}, 0\right\}\geq b-\lambda-\frac{k\log N}{\sqrt{N}}\right)\\
\stackrel{(a)}{\leq}&\frac{E\left[\max\left\{\sum_{i=1}^b S_i-\lambda-\frac{k\log N}{\sqrt{N}}, 0\right\}\right]}{b-\lambda-\frac{k\log N}{\sqrt{N}}}\\
\leq&\frac{29}{\sqrt{N}\log N}.
\end{align*}
where $(a)$ holds due to the Markov inequality. For jobs that are not discarded, the average queueing delay according to Little's law is
$$\frac{E\left[\sum_{i=1}^bS_i\right]}{\lambda(1-p_{\mathcal B_N})}.$$ Therefore, the average waiting time is
$$E[W_N]=\frac{E\left[\sum_{i=1}^bS_i\right]}{\lambda(1-p_{\mathcal B_N})}-1\leq \frac{\lambda+\frac{k\log N}{\sqrt{N}}+\frac{29(b-1)}{\sqrt{N}\log N}}{\lambda(1-p_{\mathcal B_N})}-1=\frac{\frac{k\log N}{\sqrt{N}}+\frac{29(b-1)}{\sqrt{N}\log N}+\lambda p_{\mathcal B_N}}{\lambda(1-p_{\mathcal B_N})}\leq \frac{3\log N}{\sqrt{N}}.$$ Finally, a job not routed to an idle server is either blocked or waited in the buffer
$$p_{\mathcal W_N}=  p_{\mathcal B_N}+\frac{E[W_N]}{1-p_{\mathcal B_N}},$$ where last term is from the fact that
$$\lambda \Pr\left(\hbox{a job is routed to a busy server with empty buffer}\right)=E[W_N]\lambda(1-p_{\mathcal B_N}).$$ Therefore, we have
$$p_{\mathcal W_N}\leq \frac{4\log N}{\sqrt{N}}.$$

The exact same analysis holds for I1F since it gives priority to idle servers and servers with only one job. Analysis for Po$d$ is similar, except that
\begin{align*}
p_{\mathcal B_N}=&\Pr\left(\mathcal B_N\left|S_b\leq 1-\frac{1}{N^{\alpha}}\right.\right)\Pr\left(S_b\leq 1-\frac{1}{N^{\alpha}}\right)\\
&+\Pr\left(\mathcal B_N\left|S_b> 1-\frac{1}{N^{\alpha}}\right.\right)\Pr\left(S_b> 1-\frac{1}{N^{\alpha}}\right)\\
\leq&\Pr\left(\mathcal B_N\left|S_b\leq 1-\frac{1}{N^{\alpha}}\right.\right)+\Pr\left(S_b> 1-\frac{1}{N^{\alpha}}\right)\\
\leq&\left(1-\frac{1}{N^{\alpha}}\right)^{N^\alpha \log N}+\Pr\left(\sum_{i=1}^b S_i>b-\frac{b}{N^{\alpha}}\right)\\
\leq&\frac{30}{\sqrt{N}\log N}.
\end{align*} The remaining analysis is the same.

Finally, for JIQ, we have not been able to bound $p_{\mathcal B_N}.$ However,
$$p_{\mathcal W_N}=\Pr\left(S_1=1\right)\leq \Pr\left(\sum_{i=1}^bS_i\geq 1\right)\leq \Pr\left(\max\left\{\sum_{i=1}^bS_i-\lambda-\frac{k\log N}{\sqrt{N}}\right\}\geq \frac{1}{N^{\alpha}}-\frac{k\log N}{\sqrt{N}}\right).$$ The result follows from the Markov inequality.

\section{conclusion}

In this paper, we studied the steady-state performance of a class of load balancing algorithms for many-server ($N$ servers) systems in the sub-Halfin-Whitt regime. We established an upper bound on the expected queue length with Stein's method and studied the probability that an incoming job is routed to a busy server under JSQ, I1F, JIQ, and Po$d$.


\section*{Acknowledgement} The authors would like to thank Anton Braverman and Weina Wang for stimulating discussions that led to this result.

\bibliographystyle{abbrv}
\bibliography{U:/bib/inlab-refs}

\end{document}